\title{Families of Finite Sets in which no Set is covered by the Union of the Others}
\author{Guillermo Alesandroni}
\address{2000 Rosario, Santa Fe, Argentina}
\email{guillea@okstate.edu, alesangc@wfu.edu, alesandronig@yahoo.com}
\newtheorem{theorem}{Theorem}[section]
\theoremstyle{definition}
\newtheorem{definition}[theorem]{Definition}
\DeclareMathOperator{\lcm}{lcm}
\begin{document}
\maketitle
\begin{abstract}
Let $\mathscr{F}$ be a finite nonempty family of finite nonempty sets. We prove the following: 
\begin{enumerate}[(1)]
\item $\mathscr{F}$ satisfies the condition of the title if and only if for every pair of distinct subfamilies $\{A_1,\ldots,A_r\}$, $\{B_1,\ldots,B_s\}$ of $\mathscr{F}$, $\bigcup\limits_{i=1}^r A_i \neq \bigcup\limits_{i=1}^s B_i$.
\item If $\mathscr{F}$ satisfies the condition of the title, then the number of subsets of $\bigcup\limits_{A\in \mathscr{F}} A$ containing at least one set of $\mathscr{F}$ is odd.
\end{enumerate}
We give two applications of these results, one to number theory and one to commutative algebra.
\end{abstract}

\section{Motivation and Definition}

This work is not to be confused with [2] or [3], two articles whose titles are almost identical to the title of this paper; but it is to be compared to them. In [2] and [3], Erdős, Frankl, and Furedi study a family $\mathscr{F}$ of sets in which no set is contained in the union of $r$ others, for various values of $r$. In this article, we address the particular case where $r$ is one less than the cardinality of $\mathscr{F}$. In other words, we consider a family of sets, such that no set is contained in the union of the others.

In spite of these similarities, our work is different from that of Erdős, Frankl, and Furedi both in its approach and scope. While they are interested in bounds on the cardinality of $\mathscr{F}$, we investigate intrinsic properties of $\mathscr{F}$. Once our main results are proven, we give two applications: one to number theory and one to commutative algebra.

In the next definition, we give a special name to the families to be studied in the present article.

\begin{definition}
Let $\mathscr{F}$ be a finite nonempty family of finite nonempty sets. We will say that $\mathscr{F}$ is a \textbf{dominant family of sets}, if no set in $\mathscr{F}$ is contained in the union of the others. Equivalently, we will say that $\mathscr{F}$ is dominant, if every set of $\mathscr{F}$ contains some element not shared by any of the other sets of $\mathscr{F}$.
\end{definition}

The term dominant family of sets comes from its algebraic counterpart; namely, a dominant ideal [1]. This is how these concepts are related.

Each monomial ideal can be represented as a squarefree monomial ideal using a technique known as polarization [5]. Suppose that, in its polarized form, a monomial ideal $M$ is minimally generated by squarefree monomials $m_1,\ldots,m_q$. We say that $M$ is a dominant ideal if, for each $m_i$ there is a variable $x_{m_i}$ that appears in the factorization of $m_i$ but not in the factorizations of $m_1,\ldots,\widehat{m_i},\ldots,m_q$.

Denote by $set(m_i)$ the set of variables that appear in the factorization of $m_i$. Then the condition above can be restated as follows: $M$ is dominant if each set $set(m_i)$ contains a variable $x_{m_i}$ that is not shared by any of the sets $set(m_1),\ldots,\widehat{set(m_i)},\ldots,set(m_q)$.

In other words, suppose that, in its polarized form, $M$ can be represented as $M=(m_1,\ldots,m_q)$. Then $M$ is a dominant ideal if and only if $\{set(m_1),\ldots,set(m_q)\}$ is a dominant family of sets.

Note: Free resolutions of monomial ideals is a core topic of commutative algebra. Perhaps, its most emblematic construction is the Taylor resolution [6], which yields a free resolution for each monomial ideal. The class of dominant ideals gives a complete characterization of when the Taylor resolution is minimal [1].

\section{Main results}

\begin{theorem}\label{Theorem 1}
$\mathscr{F}$ is a dominant family of sets if and only if for every pair of distinct subfamilies $\{A_1,\ldots,A_r\}$, $\{B_1,\ldots,B_s\}$ of $\mathscr{F}$, $\bigcup\limits_{i=1}^r A_i \neq \bigcup\limits_{i=1}^s B_i$.
\end{theorem}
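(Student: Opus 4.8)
The plan is to prove the two implications separately, handling the forward direction directly and the converse by contraposition. The entire content of the argument is the observation that, by definition, a dominant family $\mathscr{F}$ assigns to each $A\in\mathscr{F}$ a \emph{private} element, i.e.\ an $x\in A$ with $x\notin A'$ for every $A'\in\mathscr{F}$ with $A'\neq A$; such an $x$ by itself detects whether or not $A$ belongs to a given subfamily, and this is exactly what lets us distinguish subfamilies by their unions.

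For the direction ($\Rightarrow$), I would assume $\mathscr{F}$ is dominant and let $\mathscr{A}=\{A_1,\dots,A_r\}$ and $\mathscr{B}=\{B_1,\dots,B_s\}$ be two distinct subfamilies. Since they differ as collections of sets, one of $\mathscr{A}\setminus\mathscr{B}$, $\mathscr{B}\setminus\mathscr{A}$ is nonempty, and after relabelling I may pick $A\in\mathscr{A}\setminus\mathscr{B}$. Let $x\in A$ be a private element of $A$, which exists by dominance. Every $B\in\mathscr{B}$ lies in $\mathscr{F}$ and is distinct from $A$ (because $A\notin\mathscr{B}$), so $x\notin B$; hence $x\notin\bigcup_{i=1}^s B_i$. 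On the other hand $x\in A\subseteq\bigcup_{i=1}^r A_i$. Therefore $\bigcup_{i=1}^r A_i\neq\bigcup_{i=1}^s B_i$, as required.

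For the direction ($\Leftarrow$), I would argue contrapositively. Suppose $\mathscr{F}$ is not dominant, so some $A\in\mathscr{F}$ satisfies $A\subseteq\bigcup_{B\in\mathscr{F},\,B\neq A}B$. A nonempty set cannot be contained in the (empty) union of no sets, so $\mathscr{F}$ has at least two members; thus $\mathscr{F}\setminus\{A\}$ is a nonempty subfamily, distinct from $\mathscr{F}$. The containment above gives $\bigcup_{B\in\mathscr{F}}B=\bigcup_{B\in\mathscr{F}\setminus\{A\}}B$, so the pair $\mathscr{F}$, $\mathscr{F}\setminus\{A\}$ violates the right-hand condition, completing the contrapositive.

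I do not anticipate a genuine obstacle: each implication is essentially a one-line translation of the definition. The only points that call for a little care are bookkeeping ones — reading ``distinct subfamilies'' as distinct as collections of sets (so that one truly omits a set the other contains, which is what produces the element $A\in\mathscr{A}\setminus\mathscr{B}$), and disposing of the degenerate case $\lvert\mathscr{F}\rvert=1$ in the converse, where ``the union of the others'' is the empty set and the family is automatically dominant.
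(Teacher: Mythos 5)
Your proposal is correct and matches the paper's argument: the forward direction uses a private element of a set lying in one subfamily but not the other, and the converse rests on the same observation that $A\subseteq\bigcup_{B\in\mathscr{F}\setminus\{A\}}B$ is equivalent to $\bigcup_{B\in\mathscr{F}\setminus\{A\}}B=\bigcup_{B\in\mathscr{F}}B$ (the paper phrases this directly rather than contrapositively, a purely cosmetic difference). Your explicit handling of the degenerate case $\lvert\mathscr{F}\rvert=1$ is a small point of care the paper leaves implicit.
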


\begin{proof}
First, let us consider the case in which $\mathscr{F}$ is dominant. If $\{A_1,\ldots,A_r\}$ and $\{B_1,\ldots,B_s\}$ are different subfamilies of $\mathscr{F}$, there must be a set $C$ that belongs to one of the subfamilies but not to the other; say that $C\in \{A_1,\ldots,A_r\}$. Since $\mathscr{F}$ is dominant, there is an element $x\in C$, not shared by any other set of $\mathscr{F}$. Therefore, $x\in \bigcup\limits_{i=1}^r A_i$ and $x\notin \bigcup\limits_{i=1}^s B_i$, which proves the first implication. 

Now, let us assume that $\bigcup\limits_{i=1}^r A_i \neq \bigcup\limits_{i=1}^s B_i$, for every pair of distinct subfamilies $\{A_1,\ldots,A_r\}$, $\{B_1,\ldots,B_s\}$ of $\mathscr{F}$. Let $A\in \mathscr{F}$. By hypothesis, $\bigcup\limits_{B\in\mathscr{F}\setminus\{A\}}B \neq \bigcup\limits_{B\in\mathscr{F}}B$. That is, $\bigcup\limits_{B\in\mathscr{F}\setminus\{A\}}B \neq \left(\bigcup\limits_{B\in\mathscr{F}\setminus \{A\}}B\right) \cup A$. Thus, $A\nsubseteq \bigcup\limits_{B\in\mathscr{F}\setminus \{A\}} B$, which proves the second implication.
\end{proof}

\begin{theorem}\label{Theorem 2}
Let $\mathscr{A} = \{A_1,\ldots,A_n\}$ be a dominant family of sets, and let $A = \bigcup\limits_{i=1}^n A_i$. Then, the number of subsets of $A$ that contain at least one member of $\mathscr{A}$ is odd.
\end{theorem}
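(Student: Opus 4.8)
The plan is to count the desired subsets by inclusion--exclusion, reduce the resulting alternating sum modulo $2$, and then invoke Theorem~\ref{Theorem 1}. Write $N = |A|$, and for each $i \in \{1,\ldots,n\}$ let $U_i = \{S \subseteq A : A_i \subseteq S\}$, so that the number we must compute is $|U_1 \cup \cdots \cup U_n|$. The basic observation is that, for any $B \subseteq A$, the subsets $S$ with $B \subseteq S \subseteq A$ are in bijection with the subsets of $A \setminus B$, so there are exactly $2^{N - |B|}$ of them. Consequently, for each nonempty $I \subseteq \{1,\ldots,n\}$, the intersection $\bigcap_{i\in I} U_i$ consists of all $S$ with $\bigcup_{i\in I} A_i \subseteq S \subseteq A$, and hence has cardinality $2^{N - |\bigcup_{i\in I} A_i|}$.

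Inclusion--exclusion now gives
$$|U_1 \cup \cdots \cup U_n| \;=\; \sum_{\emptyset \neq I \subseteq \{1,\ldots,n\}} (-1)^{|I|+1}\, 2^{\,N - \lvert \bigcup_{i\in I} A_i \rvert }.$$
Reducing modulo $2$, a summand $2^{N - |\bigcup_{i\in I} A_i|}$ is odd exactly when $|\bigcup_{i\in I} A_i| = N$, i.e.\ when $\bigcup_{i\in I} A_i = A$, and is even otherwise; since the signs $(-1)^{|I|+1}$ play no role mod $2$, we conclude that $|U_1 \cup \cdots \cup U_n|$ is congruent mod $2$ to the number of nonempty subfamilies of $\mathscr{A}$ whose union equals $A$.

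It remains to show this number is $1$. The whole family $\{A_1,\ldots,A_n\}$ is one such subfamily. If $\{A_i : i \in I\}$ were another, with $I$ a proper nonempty subset of $\{1,\ldots,n\}$, then $\{A_i : i\in I\}$ and $\{A_1,\ldots,A_n\}$ would be distinct subfamilies of $\mathscr{A}$ with the same union $A$, contradicting Theorem~\ref{Theorem 1} (here we also use that the members of a dominant family are distinct, since a repeated set would be contained in the union of the others). Hence there is exactly one such subfamily, so $|U_1\cup\cdots\cup U_n| \equiv 1 \pmod 2$, as claimed. I expect the only real obstacle to be psychological: one has to notice that passing to parity collapses the entire inclusion--exclusion sum down to the single ``full-union'' term, after which the uniqueness statement of Theorem~\ref{Theorem 1} closes the argument at once; it is also worth a quick sanity check on small cases, such as $n=1$ or a family in which some $A_i$ already equals $A$.
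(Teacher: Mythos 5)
Your proof is correct. It does not follow the paper's main argument, which proceeds by induction on $n$: there one counts instead the subsets of $A$ containing \emph{no} member of $\mathscr{A}$, picks an element $x\in A_1$ not belonging to any other $A_i$, pairs off most such subsets via $B\mapsto B\setminus\{x\}$, and identifies the unpaired ones with the corresponding count for the smaller dominant family $\{A_2\setminus A_1,\ldots,A_n\setminus A_1\}$. However, your route is essentially a reformulation of the alternative proof (due to Grinberg) that the paper also sketches: there one double-counts the pairs $(X,J)$ with $\bigcup_{B\in J}B\subseteq X\subseteq A$, and the sum over $J$ produces exactly your terms $2^{N-\lvert\bigcup_{i\in I}A_i\rvert}$; reducing mod $2$ isolates the subfamilies whose union is all of $A$, just as your inclusion--exclusion does once the signs are discarded. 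The only real divergence is at the final step: Grinberg appeals directly to dominance (an element private to any omitted $A_j$ witnesses $\bigcup_{i\in I}A_i\neq A$), whereas you invoke Theorem~\ref{Theorem 1}; both are valid, and your parenthetical observation that the members of a dominant family are necessarily distinct is a worthwhile precaution, since Theorem~\ref{Theorem 1} compares subfamilies rather than index sets. In short, your argument buys a self-contained, induction-free proof at the cost of quoting Theorem~\ref{Theorem 1}, while the paper's inductive proof is longer but entirely elementary and internal to Theorem~\ref{Theorem 2}.
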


\begin{proof}
Since the power set of $A$ has even cardinality, it is enough to show that the number of subsets of $A$ not containing any of the elements of $\mathscr{A}$ is odd. The proof is by induction on $n$. If $n=1$, then $\mathscr{A} = \{A_1\}$, $A = A_1$, and the number of subsets of $A$ that do not contain any sets of $\mathscr{A}$ is $2^{\mid A\mid} - 1$, which is odd. Suppose that the theorem holds for $n-1$. Let $\mathscr{B}$ be the family of all subsets of $A$ not containing any elements of $\mathscr{A}$. Since $\mathscr{A}$ is dominant, there exists an element $x\in A_1\setminus\bigcup\limits_{i=2}^n A_i$. Define $\mathscr{B}_{x^-} = \{B\in\mathscr{B}:x\notin B\}$, and $\mathscr{B}_{x^+} = \{B\in\mathscr{B}:x\in B\}$. Note that $\mathscr{B}$ is the disjoint union of $\mathscr{B}_{x^-}$ and $\mathscr{B}_{x^+}$. Consider the function
\[\begin{array}{lllll}
f&:&\mathscr{B}_{x^+}&\rightarrow&\mathscr{B}_{x^-}\\
&&B&\rightarrow&B\setminus\{x\}.
\end{array} \] 
It is clear that $f$ is one-to-one. Therefore,
\begin{align*}
\mid\mathscr{B}\mid&=\mid\mathscr{B}_{x^+}\mid + \mid f(\mathscr{B}_{x^+})\mid + \mid\mathscr{B}_{x^-}\setminus f(\mathscr{B}_{x^+})\mid\\
&=2\mid\mathscr{B}_{x^+}\mid + \mid \mathscr{B}_{x^-}\setminus f(\mathscr{B}_{x^+})\mid.
\end{align*} 
Thus, in order to prove that $\mid\mathscr{B}\mid$ is odd, it is enough to prove that $\mid\mathscr{B}_{x^-}\setminus f(\mathscr{B}_{x^+})\mid$ is odd. This fact follows from two assertions that we will consider previously.

The first assertion is as follows:
\[\mathscr{B}_{x^-}\setminus f(\mathscr{B}_{x^+}) =\left \{B\in \mathscr{B}_{x^-}: A_1\subseteq B\cup \{x\}\right\}.\]
To show this, choose a set $D\in\mathscr{B}_{x^-}$, such that $A_1\subseteq D\cup\{x\}$. We want to prove that $D\in\mathscr{B}_{x^-}\setminus f(\mathscr{B}_{x^+})$. Suppose not. Then there is a set $C\in \mathscr{B}_{x^+}$, such that $f(C)=D$. By construction, $D=C\setminus \{x\}$. Hence, $D\cup\{x\}=C$, which implies that $A_1\subseteq C$, a contradiction. We conclude that $\left \{B\in \mathscr{B}_{x^-}: A_1\subseteq B\cup \{x\}\right\}\subseteq \mathscr{B}_{x^-}\setminus f(\mathscr{B}_{x^+})$.
Conversely, if $D\in \mathscr{B}_{x^-}\setminus f(\mathscr{B}_{x^+})$, we must have that $D\cup\{x\}\notin \mathscr{B}_{x^+}$. Therefore, for some $j$, $A_j\subseteq D\cup \{x\}$, but $A_j\nsubseteq D$. Since $x\in A_1\setminus \bigcup\limits_{i=2}^n A_n$, it follows that $j=1$, and 
$D\in\left \{B\in \mathscr{B}_{x^-}: A_1\subseteq B\cup\{x\}\right\}$, which proves our assertion.

The second assertion is as follows: if $\mathscr{A}' = \{A_2\setminus A_1,\ldots,A_n\setminus A_1\}$, $A' = \bigcup\limits_{i=2}^n (A_i\setminus A_1)$, and 
$\Lambda = \left\{B\setminus A_1 : B\in\mathscr{B}_{x^-}\setminus f(\mathscr{B}_{x^+})\right\}$, then $\Lambda$ is the class of all subsets of $A'$ not containing any of the elements of $\mathscr{A}'$. 
To prove this fact, consider a set $B\setminus A_1 \in\Lambda$. Since $B\in\mathscr{B}_{x^-}\setminus f(\mathscr{B}_{x^+})$, $A_1\subseteq B\cup\{x\}$. For each $i=2,\ldots,n$, let $y_i\in A_i\setminus B$. Since $x\notin A_i$, $y_i\in A_i\setminus (B\cup\{x\})\subseteq A_i\setminus A_1$. And since $y_i\notin B$, 
$A_i\setminus A_1 \nsubseteq B\setminus A_1$. Therefore, $B\setminus A_1$ is a subset of $A'$ not containing any elements of $\mathscr{A}'$, which proves the first inclusion. To prove the other inclusion, suppose that $C$ is a subset of $A'$ not contining any of $A_2\setminus A_1,\ldots,A_n\setminus A_1$.
This implies that $C\cup (A_1\setminus\{x\})$ is a subset of $A$ not containing any of $A_1,\ldots,A_n$. Hence, $C\cup(A_1\setminus\{x\}) \in \mathscr{B}$ and, given that $x\notin C\cup(A_1\setminus \{x\})$, we have that $C\cup(A_1\setminus \{x\}) \in \mathscr{B}_{x^-}$. Finally, $A_1\subseteq \left[C\cup (A_1\setminus\{x\})\right]\cup \{x\}$, which implies that $C\cup(A_1\setminus\{x\}) \in \mathscr{B}_{x^-}\setminus f(\mathscr{B}_{x^+})$. Thus, $C=\left(C\cup (A_1\setminus\{x\})\right)\setminus A_1\in\Lambda$. This proves the second inclusion and, hence, our assertion.

Now we can complete the proof. Since $\mathscr{A}'$ is a dominant family of sets, of cardinality $n-1$, it follows that $\mid \Lambda \mid$ is odd, by induction hypothesis. But $\mid\mathscr{B}_{x^-}\setminus f(\mathscr{B}_{x^+})\mid = \mid\Lambda\mid$, which proves the theorem.
\end{proof}

Below we sketch an alternative proof to Theorem \ref{Theorem 2} (which was ingeniously created by Darij Grinberg). The notation $A$ and $\mathscr{A}$ is as in Theorem \ref{Theorem 2}. 

Let $p$ be the number of all pairs $(X, J)$ with $X \subseteq A$ and
$J \subseteq \mathscr{A}$ satisfying $X \supseteq \bigcup_{B \in J} B$.

We shall compute $p$ in two ways:

One way is: $p = \sum_{X \subseteq A} q_X$, where $q_X$ is the
number of all $J \subseteq \mathscr{A}$ satisfying $X \supseteq \bigcup_{B
\in J} B$. It is easy to see that $q_X = 2^m$, where $m$ is the number
of all $B \in \mathscr{A}$ satisfying $X \supseteq B$. Hence, $q_X$ is even
if and only if $X \supseteq B$ for some $B \in \mathscr{A}$. Therefore, $p$
is congruent modulo $2$ to the number of all subsets $X \subseteq A$
that contain none of the sets in $\mathscr{A}$.

Another way is: $p = \sum_{J \subseteq \mathscr{A}} r_J$, where $r_J$ is
the number of all $X \subseteq A$ satisfying $X \supseteq \bigcup_{B
\in J} B$. It is easy to see that $r_J = 2^r$, where $r$ is the size
of the set $A \setminus \bigcup_{B \in J} B$. Hence, $q_X$ is even if
and only if $A \setminus \bigcup_{B \in J} B$ is nonempty; in other
words, $q_X$ is even if and only if $J \neq \mathscr{A}$ (here we use the
dominance requirement). Thus, $p \equiv 1 \mod 2$.

Comparing these two results, we conclude that the number of
subsets of $A$ that contain none of the sets in $\mathscr{A}$ is
$\equiv 1 \mod 2$. In other words, this number is odd. Since $2^{|A|}$ is even, it follows that the
number of subsets of $A$ containing at least one member of $\mathscr{A}$ is also odd.

\section{Application to number theory}

Suppose that $a_1,a_2,a_3,a_4$ are $4$ squarefree numbers (i.e., each of them is the product of distinct primes). Label the vertices of a solid tetrahedron $T$ with $a_1,a_2,a_3,a_4$, as shown in the figure below.

\[\begin{tikzpicture}

\draw[thick] (0,0) -- (3,0) -- (3.5,1)  -- (1.5,2.5) -- (3,0);
\draw[thick] (1.5,2.5)--(0,0);
\draw[thick, dashed] (0,0) -- (3.5,1) ;

\fill (0,0) circle (0pt) node[below] {$a_1$};
\fill (3,0) circle (0pt) node[below] {$a_2$};
\fill (1.5,2.5) circle (0pt) node[above] {$a_4$};
\fill (3.5,1) circle (0pt) node[right] {$a_3$};

\end{tikzpicture}\]

Next, label each edge of $T$ with the $\lcm$ of its $2$ vertices (for example, the edge with vertices $a_1,a_2$ gets the label $\lcm(a_1,a_2)$). Then, label each face of $T$ with the $\lcm$ of its $3$ vertices and, finally, label $T$ itself with the $\lcm$ of its $4$ vertices. That is, $T$ receives the label $a=\lcm(a_1,a_2,a_3,a_4)$. 
Each vertex, edge, and face of $T$, as well as $T$ itself, will be called a multiface. That is, the term multiface encompasses all objects that have been given a label.

We will consider the following problems.

(i) State a condition C such that, all multifaces have distinct labels if and only if condition C is satisfied. 

(ii) Suppose that condition C is satisfied. Let $N$ be the set of divisors of $a$ that are divisible by one of $a_1,a_2,a_3,a_4$. Is $\mid N\mid$ even or odd?

(i) For each squarefree integer $n$, denote by $set(n)$ the set of prime factors of $n$. Note that $set(\lcm(a_i,a_j)) = set(a_i)\cup set(a_j)$ and, more generally, $set(\lcm(a_{i_1},\ldots,a_{i_r})) = \bigcup\limits_{t=1}^r set(a_{i_t})$.
 Let $\lcm(a_{i_1},\ldots,a_{i_r})$, $\lcm(a_{j_1},\ldots,a_{j_s})$ be the labels of two multifaces. Then 
\begin{center}
$\lcm(a_{i_1},\ldots,a_{i_r})\neq \lcm(a_{j_1},\ldots,a_{j_s})$\\
\textit{if and only if}\\
 $set(\lcm(a_{i_1},\ldots,a_{i_r})) \neq set(\lcm(a_{j_1},\ldots,a_{j_s}))$\\
\textit{if and only if}\\
 $\bigcup\limits_{t=1}^r set(a_{i_t})\neq \bigcup\limits_{t=1}^s set (a_{j_t})$. 
\end{center}
Therefore, by Theorem \ref{Theorem 1}, 
\begin{center}
all multifaces have different labels\\
\textit{if and only if}\\
$ \{set(a_1),\ldots,set(a_4)\}\text{ is a dominant family of sets}$\\
\textit{if and only if}\\
each $set(a_i)$ contains a prime $p_i$ not shared by any of $set(a_1),\ldots,\widehat{set(a_i)},\ldots,set(a_4)$\\
\textit{if and only if}\\
there is a prime $p_i$ that divides $a_i$, but not any of the integers $a_1,\ldots,\widehat{a_i},\ldots,a_4$.
\end{center}

 In consequence, condition C can be stated as follows.

C: for each $i=1,\ldots,4$, there is a prime $p_i$ that is a factor of $a_i$ but not a factor of any of $a_1,\ldots,\widehat{a_i},\ldots,a_4$.

(ii) Consider the set $N = \{n\in\mathbb{N}: n\mid a \text{, and }a_i\mid n \text{, for some } 1\leq i\leq4\}$. Note that $n\in N$ if and only if $set(a_i)\subseteq set(n)\subseteq set(\lcm(a_1,\ldots,a_4)) = \bigcup\limits_{t=1}^4 set(a_t)$, for some $1\leq i\leq 4$. Hence, $\mid N\mid$ equals the number of subsets of $\bigcup\limits_{t=1}^4 set(a_t)$ that contain one set of $\mathscr{F} = \{set(a_1),\ldots,set(a_4)\}$. By Theorem \ref{Theorem 2}, $\mid N\mid$ is odd.

\section{Application to commutative algebra}

We close this paper with an application to free resolutions of monomial ideals. Even though we will use simple terms, some background material may be needed, and can be found in [4, 5].

Let $A=\{m_1,\ldots,m_q\}$ be a set of monomials, and denote $m=\lcm(A)$. We will say that a subfamily $\{m_{i_1},\ldots,m_{i_j}\}$ of $A$ is a \textbf{generator of $m$ in $A$} (or just a generator of $m$), if $\lcm(m_{i_1},\ldots, m_{i_j})=m$. In addition, a generator of $m$, none of whose proper subsets is a generator of $m$, will be called a \textbf{minimal generator of $m$ in $A$}.

Consider the class $\mathscr{F} = \{A_1,\ldots,A_r\}$ of all minimal generators of $m$ in $A$, and suppose that $A=\bigcup\limits_{i=1}^r A_i$. Notice that every generator of $m$ must be a subset of $A$ that contains one of $A_1,\ldots,A_r$. By Theorem \ref{Theorem 2}, if $\mathscr{F}$ is a dominant family of sets, the number of generators of $m$ is odd.

We will now explain how this result fits in the study of monomial resolutions. One important question in this topic is: if $m$ is a multidegree of the Taylor resolution of a monomial ideal $M$, does the minimal resolution of $M$ contain basis elements of multidegree $m$?

It is known that the minimal resolution of $M$ can be obtained from its Taylor resolution by doing a series of computations known as consecutive cancellations [5], each of which eliminates two basis elements of equal multidegree. This amounts to saying that the number of basis elements of multidegree $m$ in Taylor's resolution, and the number of basis elements of multidegree $m$ in the minimal resolution of $M$ differ by an even number. Therefore, if the number of basis elements of multidegree $m$ in the Taylor resolution is odd, then the minimal resolution of $M$ will contain at least one element of multidegree $m$.

And how can we tell if the number of basis elements of multidegree $m$ in the Taylor resolution is odd? Informally speaking, these basis elements are presicely the sets that we call generators of $m$ in $A$, where $A$ is the set of all minimal generators of $M$ that divide $m$ [4]. (Note that the minimal generators of $M$ are monomials, while the minimal generators of $m$ are sets of monomials.) It is here where Theorem \ref{Theorem 2} becomes a useful tool.

Summarizing, suppose that $m$ is a multidegree that occurs in the Taylor resolution of a monomial ideal $M$. Let $A$ be the class of all minimal generators of $M$ that divide $m$. Suppose that $A=\bigcup\limits_{i=1}^r A_i$, where $\mathscr{F} = \{A_1,\ldots,A_r\}$ is the class of all minimal generators of $m$. If $\mathscr{F}$ is a dominant family of sets, then the number of basis elements of multidegree $m$ in the Taylor resolution of $M$ is odd, and hence, the minimal resolution of $M$ contains a least one basis element of multidegree $m$.

For example, consider the ideal $M=( x_1x_2,x_3x_4,x_5x_6,x_1x_3x_5,x_2x_4x_6,x_7)$, and let $m= x_1x_2x_3x_4x_5x_6x_7$, which is a multidegree of the Taylor resolution of $M$. Then, $A=\{ x_1x_2,x_3x_4,x_5x_6,x_1x_3x_5,x_2x_4x_6,x_7\}$ (and $m=\lcm(A)$). It is not difficult to see that $A_1=\{ x_1x_2,x_3x_4,x_5x_6,x_7\}$ and $A_2=\{x_1x_3x_5,x_2x_4x_6,x_7\}$ are the only minimal generators of $m$ in $A$. Since $A=A_1 \cup A_2$, and given that $\mathscr{F} = \{A_1,A_2\}$ is a dominant family of sets, we conclude that the minimal resolution of $M$ contains at least one basis element of multidegree $m$. 

 \bigskip

\noindent \textbf{Acknowledgements}: A few days after I posted the first version of this paper to the arxiv, Darij Grinberg emailed me an elegant alternative proof to Theorem \ref{Theorem 2}, which is included in this new version. Thanks, Darij! My wife Danisa always types and proofreads my papers, which I deeply appreciate. Thanks, sweetheart!

\end{document}